\documentclass[12pt]{amsproc}

\usepackage{setspace}
\usepackage{latexsym}
\usepackage{amsmath, amssymb, amsfonts, amsthm}
\usepackage{float}
\usepackage{color}
\usepackage{mathrsfs}
\usepackage{hyperref} 
\hypersetup{citecolor=red, linkcolor=blue, colorlinks=true}

\usepackage{bm}
\usepackage[margin=1in]{geometry}
\usepackage{url}

\newtheorem{thm}{Theorem}[section]
\newtheorem{lem}[thm]{Lemma}
\newtheorem{prop}[thm]{Proposition}
\newtheorem{cor}[thm]{Corollary}

\renewcommand\le{\leqslant}

\newcommand{\calP}{\mathcal{P}}

\newcommand{\calQ}{\mathcal{Q}}

\newcommand{\pg}{{\rm pg}}
\newcommand{\calS}{\mathcal{S}}
\newcommand{\Cay}{{\rm Cay}}

\title{Restrictions on parameters of partial difference sets in nonabelian groups}

\author{Eric Swartz}
\address{Department of Mathematics, William \& Mary, P.O. Box 8795, Williamsburg, VA 23187-8795, USA}
\email{easwartz@wm.edu}

\author{Gabrielle Tauscheck}
\address{Department of Mathematics, University of South Carolina, 1523 Greene Street, Columbia, SC 29208-4014, USA}
\email{tauscheg@email.sc.edu}

\begin{document}

\begin{abstract}
A partial difference set $S$ in a finite group $G$ satisfying $1 \notin S$ and $S = S^{-1}$ corresponds to an undirected strongly regular Cayley graph ${\rm Cay}(G,S)$.  While the case when $G$ is abelian has been thoroughly studied, there are comparatively few results when $G$ is nonabelian.  In this paper, we provide restrictions on the parameters of a partial difference set that apply to both abelian and nonabelian groups and are especially effective in groups with a nontrivial center.  In particular, these results apply to $p$-groups, and we are able to rule out the existence of partial difference sets in many instances.
\end{abstract}

\maketitle

\section{Introduction}
\label{sect:intro}

A subset $S$ of a finite group $G$ is a $(v, k, \lambda, \mu)$-\textit{partial difference set} (PDS) of $G$ if $|G| = v$, $|S| = k$, and each nonidentity element $g \in G$ can be written in either $\lambda$ or $\mu$ different ways (depending on whether or not $g$ is in S) as $g = ab^{-1}$, where $a,b \in S$, and, following \cite{M}, $S$ is said to be a \textit{regular PDS} if $1 \notin S$ and $S^{-1} = S$ (i.e., $s^{-1} \in S$ whenever $s \in S$).  

Part of the motivation to find examples of regular partial difference sets is the connection with strongly regular Cayley graphs.  Given a subset $S$ of the group $G$, the \textit{Cayley graph} $\Cay(G,S)$ is defined to be the graph with vertex set the elements of $G$ such that $g, h \in G$ are adjacent if and only if $gh^{-1} \in S$.  If the set $S$ is a $(v, k, \lambda, \mu)$-PDS, then the Cayley graph $\Cay(G,S)$ is a $(v, k, \lambda, \mu)$-\textit{strongly regular graph} (SRG) \cite[Proposition 1.1]{M}, which means that $\Cay(G,S)$ has $v$ vertices, $\Cay(G,S)$ is regular of degree $k$, any two adjacent vertices in $\Cay(G,S)$ have exactly $\lambda$ common neighbors, and any two nonadjacent vertices in $\Cay(G,S)$ have exactly $\mu$ common neighbors.  For further applications of partial difference sets to coding theory and finite geometry, see the survey of Ma \cite{M}.

The case when $G$ is abelian has been thoroughly studied; see \cite{M} for a survey of older results and \cite{DKW, DNW, DW1, DW2, FMX, MS, MX, Polhill, W1, W2} for a number of very recent results.  On the other hand, comparatively little is known in the case when $G$ is nonabelian.  There have been constructions in sporadic cases (see, for instance, \cite{GMV, JK}) and some instances of constructions of infinite families (see \cite{FHC, FL,  Swartz}).  At the same time, there have been relatively few results dealing with the nonabelian case in general.  For instance, in the abelian case, there are restrictions such on \textit{(numerical) multipliers} of the set $S$ (see \cite[Theorem 4.1]{M}), whereas in the general case when the group is allowed to be nonabelian, the corresponding restriction is weaker (see \cite[Theorem 4.3]{M}, originally published in \cite{GL}).  

The purpose of this paper is to provide new restrictions on partial difference sets that apply to nonabelian groups as well as abelian groups.  In \cite{Yoshiara}, Yoshiara was able to provide restrictions on groups acting regularly on the point set of a finite \textit{generalized quadrangle} (see Subsection \ref{sub:pggq}).  Since the \textit{collinearity graph} of a generalized quadrangle is a strongly regular graph, this provides restrictions on the strongly regular Cayley graphs (and, hence, partial difference sets) in this specific instance.  One of the main ingredients in Yoshiara's analysis was Benson's Lemma (see Lemma \ref{lem:Benson}), which has since been generalized to strongly regular graphs by DeWinter, Kamischke, and Wang \cite{DKW}.  The main idea of this paper is to generalize the results of Yoshiara to general strongly regular graphs using this recent result of DeWinter, Kamischke, and Wang.

This paper is organized as follows.  In Section \ref{sect:prelim}, we introduce the preliminary combinatorial results about strongly regular graphs and summarize Yoshiara's results from \cite{Yoshiara}.  In Section \ref{sect:YoshSRG}, we generalize Yoshiara's results to strongly regular graphs.  We apply these results to \textit{partial geometries} in Section \ref{sect:pg}, and we produce simple numerical conditions in Section \ref{sect:rest} that rule out the existence of partial difference sets in many instances, especially when the group has a nontrivial center.  Finally, in Section \ref{sect:infeas} we include a table of parameters of partial difference sets that are ruled out in groups having a nontrivial center using the results in Section \ref{sect:rest}.

\section{Preliminaries}
\label{sect:prelim}

\subsection{Strongly regular graphs and their parameters}

Let $\Gamma$ be a $(v,k,\lambda, \mu)$-strongly regular graph.  If $\Gamma$ has adjacency matrix $A$, then $A$ has eigenvalues $\nu_1 = k$,
\[\nu_2 = \frac{1}{2}\left(\lambda - \mu + \sqrt{\Lambda} \right), \quad \nu_3 = \frac{1}{2}\left(\lambda - \mu - \sqrt{\Lambda} \right),\]
where $\Lambda = (\lambda - \mu)^2 + 4(k - \mu) = (\nu_2 - \nu_3)^2$.  A \textit{conference graph} is a strongly regular graph with $k = (v-1)/2$, $\lambda = (v-5)/4$, and $\mu = (v-1)/4$.  If the strongly regular graph $\Gamma$ is not a conference graph, then $2k + (v-1)(\lambda - \mu) \neq 0$, and the eigenvalues $\nu_2$ and $\nu_3$ are integers.

The following lemma collects some well-known results about the relationships among the parameters of a strongly regular graph.  (See, for instance, \cite{BCN} or \cite{GodsilRoyle}.)  These results are used freely later in the paper.

\begin{lem}
 \label{lem:SRGparameters}
 Let $\Gamma$ be a $(v,k,\lambda, \mu)$-strongly regular graph with eigenvalues $k, \nu_2, \nu_3$, where $\nu_2 > \nu_3$.  Then, the following hold:
 \begin{itemize}
  \item[(1)] $k(k - \lambda - 1) = (v - k - 1)\mu$; 
  \item[(2)] $k = \mu - \nu_2 \nu_3$;
  \item[(3)] $\mu v = (k - \nu_2)(k - \nu_3)$, and, if $\mu > 0$, then $v = (k - \nu_2)(k - \nu_3)/\mu$;
  \item[(4)] $\lambda = \mu + \nu_2 + \nu_3$;
  \item[(5)] the complement of $\Gamma$ is a $(v, v-k-1, v -2k + \mu -2, v - 2k + \lambda)$-strongly regular graph with eigenvalues $-\nu_3 - 1$ and $-\nu_2 - 1$.
 \end{itemize}
\end{lem}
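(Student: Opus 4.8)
The plan is to derive all five identities from a single source: the fundamental matrix equation satisfied by the adjacency matrix $A$ of $\Gamma$. Counting walks of length two between an ordered pair of vertices, according to whether the endpoints coincide, are adjacent, or are nonadjacent, yields
\[ A^2 = kI + \lambda A + \mu(J - I - A), \]
where $I$ is the identity matrix and $J$ is the all-ones matrix; equivalently,
\[ A^2 - (\lambda - \mu)A - (k - \mu)I = \mu J. \]
Everything else will come from reading off this identity on the two relevant invariant subspaces.

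First I would record the eigenvalue relations. The all-ones vector $\mathbf{1}$ satisfies $A\mathbf{1} = k\mathbf{1}$ and $J\mathbf{1} = v\mathbf{1}$, so applying the identity to $\mathbf{1}$ gives $k^2 - (\lambda - \mu)k - (k - \mu) = \mu v$; expanding and regrouping yields part (1). On the orthogonal complement $\mathbf{1}^{\perp}$ the matrix $J$ acts as zero, so on this subspace $A$ satisfies $x^2 - (\lambda - \mu)x - (k - \mu) = 0$. Since $\nu_2$ and $\nu_3$ are exactly the eigenvalues of $A$ on $\mathbf{1}^{\perp}$, they are the two roots of this quadratic, and Vieta's formulas give $\nu_2 + \nu_3 = \lambda - \mu$ and $\nu_2\nu_3 = \mu - k$. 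These are precisely parts (4) and (2). For part (3), expanding $(k - \nu_2)(k - \nu_3) = k^2 - (\nu_2 + \nu_3)k + \nu_2\nu_3$ and substituting the two Vieta relations reproduces the right-hand side of the identity applied to $\mathbf{1}$, namely $\mu v$; the final claim follows upon dividing by $\mu$ when $\mu > 0$.

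For part (5), the complement $\overline{\Gamma}$ has adjacency matrix $\overline{A} = J - I - A$, which is immediately $(v - k - 1)$-regular. On $\mathbf{1}^{\perp}$ we have $\overline{A} = -I - A$, so its eigenvalues there are $-1 - \nu_2$ and $-1 - \nu_3$; since $\nu_2 > \nu_3$, the larger is $-\nu_3 - 1$ and the smaller is $-\nu_2 - 1$, as claimed. The parameters $v - 2k + \mu - 2$ and $v - 2k + \lambda$ I would obtain by direct inclusion-exclusion: for two vertices that are nonadjacent in $\Gamma$ (hence adjacent in $\overline{\Gamma}$) one counts the vertices that are nonneighbors of both, and for two vertices adjacent in $\Gamma$ one does the same, in each case removing the two chosen vertices and subtracting those adjacent to at least one. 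This last bookkeeping is the only place demanding care — one must track correctly whether each chosen vertex is a neighbor of the other and handle the two cases (adjacent versus nonadjacent endpoints) separately — but it is entirely elementary. I expect no genuine obstacle here; the content of the lemma is simply the clean translation of the matrix identity into the eigenvalue and counting statements.
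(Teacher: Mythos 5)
Your proof is correct. Note that the paper itself gives no proof of this lemma: it is stated as a collection of well-known facts with a pointer to standard references (Brouwer--Cohen--Neumaier and Godsil--Royle), and the argument you give --- reading the identity $A^2 = kI + \lambda A + \mu(J-I-A)$ on the span of $\mathbf{1}$ and on $\mathbf{1}^{\perp}$, then handling the complement via $\overline{A} = J - I - A$ and inclusion-exclusion --- is exactly the standard proof found in those references, so there is nothing to reconcile.
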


\subsection{Partial geometries and generalized quadrangles}
\label{sub:pggq}

A \textit{partial geometry} $\pg(s,t, \alpha)$ is an incidence geometry of points and lines satisfying the following three conditions:

\begin{itemize}
 \item[(i)] each point is incident with $t + 1$ lines, and two points are mutually incident with at most one line;
 \item[(ii)] each line is incident with $s + 1$ points, and two lines are mutually incident with at most one point;
 \item[(iii)] if $P$ is a point and $\ell$ is a line not incident with $P$, there are exactly $\alpha$ points on $\ell$ collinear with $P$.
\end{itemize}

If $\calS$ is a $\pg(s,t, \alpha)$, then we say it has order $(s,t, \alpha)$.  If $\alpha = 1$, then the partial geometry is a \textit{generalized quadrangle} of order $(s,t)$.

Given a partial geometry $\calS$ of order $(s,t, \alpha)$, one may define the associated \textit{collinearity graph} $\Gamma_{\calS}$, which has vertex set the points of $\calS$ and edge set consisting of pairs of collinear points.  It is well known that the collinearity graph of a $\pg(s,t,\alpha)$ is an $((s+1)(st + \alpha)/\alpha, s(t+1), s - 1 + t(\alpha - 1), \alpha(t+1))$-strongly regular graph.

\subsection{Automorphisms of generalized quadrangles and strongly regular graphs}

In this subsection, we present some of the main ideas of Yoshiara in \cite{Yoshiara} and the recent result of DeWinter, Kamischke, and Wang \cite{DKW}, which is a generalization of Benson's Lemma to arbitrary strongly regular graphs.  Throughout this paper, if $G$ is a group and $x \in G$, then $x^G$ denotes the conjugacy class of $G$ containing $x$ and $C_G(x)$ denotes the centralizer of $x$ in $G$.

\begin{lem}\cite[Lemma 3]{Yoshiara}
\label{lem:Yoshiara1}
 Suppose the group $G$ acts regularly on the point set $\calP$ of a generalized quadrangle $\calQ$ of order $(s,t)$.  Fix a distinguished point $O$ of $\calQ$.  Let $a \in G$ be a nontrivial automorphism, $\Delta := \{g \in G: O^g \sim O\} \cup \{1\}$, $\Delta^c$ be the set-theoretic complement of $\Delta$ in $G$, and let $d_1(a)$ be the number of points that mapped to collinear points but are not fixed by $a$.  Then,
 \[d_1(a) = |a^G \cap \Delta| |C_G(a)| = (s+1)(t+1) + (s+t)u_a \]
 for some integer $u_a$,  Furthermore,
 \[ |a^G \cap \Delta^c| |C_G(a)|= t(s-1)(s+1) - (s+t)u_a.\]
\end{lem}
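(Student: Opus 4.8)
The plan is to use the regularity of the action to transport everything into the group $G$, prove the first equality by an orbit-counting argument, and then invoke Benson's Lemma for the arithmetic. First I would identify $\calP$ with $G$ via the bijection $g \mapsto O^g$, which is well defined and bijective precisely because $G$ acts regularly. Under this identification $a$ maps the point $O^g$ to $O^{ga}$, and since $G$ acts regularly the nontrivial element $a$ fixes no point; hence every point counted by $d_1(a)$ is automatically ``not fixed,'' and $d_1(a)$ simply counts those $g$ for which $O^g \sim O^{ga}$. Since collinearity is $G$-invariant and $\Delta$ is symmetric (one checks $g \in \Delta \iff g^{-1}\in\Delta$ from $O^g \sim O \iff O^{g^{-1}} \sim O$), applying the automorphism $g^{-1}$ to both points gives $O^g \sim O^{ga} \iff O \sim O^{gag^{-1}} \iff gag^{-1} \in \Delta$, the identity being excluded automatically because $a \neq 1$. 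Thus $d_1(a) = |\{g \in G : gag^{-1}\in\Delta\}|$.

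For the first equality I would then use the standard fact that the conjugation map $g \mapsto gag^{-1}$ surjects $G$ onto $a^G$ with fibers the left cosets of $C_G(a)$, so it is exactly $|C_G(a)|$-to-one. Counting the preimage of $a^G \cap \Delta$ yields $d_1(a) = |a^G \cap \Delta|\,|C_G(a)|$, which is the first claimed expression.

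The arithmetic identity is where Benson's Lemma enters. Viewing $a$ as a collineation of $\calQ$, Lemma \ref{lem:Benson} relates, modulo $s+t$, the (weighted) number of fixed points and the number of points sent to collinear points; since $a$ fixes no point this specializes to $d_1(a) \equiv (s+1)(t+1) \pmod{s+t}$, that is, $d_1(a) = (s+1)(t+1) + (s+t)u_a$ for some integer $u_a$. The integrality of $u_a$ ultimately rests on the nontrivial eigenvalues $s-1$ and $-(t+1)$ of the collinearity graph being integers, so that the permutation matrix of $a$ has integer trace on each eigenspace. For the complementary count I would observe that $\Delta$ and $\Delta^c$ partition $G$ and that $1 \notin a^G$, whence $|a^G \cap \Delta| + |a^G \cap \Delta^c| = |a^G| = |G|/|C_G(a)|$. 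Multiplying by $|C_G(a)|$ and using $|G| = v = (s+1)(st+1)$ gives $|a^G \cap \Delta^c|\,|C_G(a)| = (s+1)(st+1) - d_1(a)$; substituting the formula for $d_1(a)$ and simplifying $(s+1)(st+1) - (s+1)(t+1) = t(s-1)(s+1)$ produces the desired $t(s-1)(s+1) - (s+t)u_a$.

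The main obstacle is getting the translation in the first step exactly right, namely verifying that $O^g$ is collinear with its image precisely when the conjugate $gag^{-1}$ lies in $\Delta$ and that no fixed points occur, together with citing Benson's Lemma in the correct normalization; once these are settled, both counts follow from elementary orbit-counting and the identity $v=(s+1)(st+1)$.
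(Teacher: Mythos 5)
Your proof is correct and follows essentially the same strategy the paper uses for its generalization of this lemma (Lemma \ref{lem:main1}): regularity forces $d_0(a)=0$, the conjugation map $g \mapsto gag^{-1}$ with fibers the cosets of $C_G(a)$ gives $d_1(a) = |a^G \cap \Delta|\,|C_G(a)|$, Benson's Lemma supplies the congruence, and the complement count follows from $|a^G|\,|C_G(a)| = v = (s+1)(st+1)$. All the arithmetic checks out, including the reduction $(s+1)(t+1) \equiv st+1 \pmod{s+t}$ and the simplification $(s+1)(st+1) - (s+1)(t+1) = t(s-1)(s+1)$.
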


\begin{lem}\cite[Lemma 6]{Yoshiara}
\label{lem:Yoshiara2}
 Suppose the group $G$ acts regularly on the point set $\calP$ of a generalized quadrangle $\calQ$ of order $(s,t)$, let $r = \gcd(s,t)$, and let $a$ be a nontrivial element of $G$.  For a distinguished point $O$ of $\calQ$, define $\Delta := \{ g \in G: O^g \sim O\} \cup \{1\}$.  Then, the following hold.
 \begin{itemize}
  \item[(1)] If $r > 1$, then $a^G \cap \Delta \neq \varnothing$.
  \item[(2)] $|a^G \cap \Delta^c|$ is a multiple of $r$ (possibly equal to $0$).
 \end{itemize}
\end{lem}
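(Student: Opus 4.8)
The plan is to derive both statements directly from the integer identities in Lemma~\ref{lem:Yoshiara1} by reducing everything modulo $r = \gcd(s,t)$. Write $c := |C_G(a)|$, and note that since $G$ acts regularly on $\calP$ we have $|G| = v = (s+1)(st+1)$, the number of points of $\calQ$; in particular $c$ divides $v$ by Lagrange's theorem. Because $r$ divides both $s$ and $t$, we have $s \equiv t \equiv 0 \pmod r$, and hence
\[(s+1)(t+1) \equiv 1, \qquad s+t \equiv 0, \qquad v = (s+1)(st+1) \equiv 1 \pmod r.\]
The single observation that drives the whole argument is that $\gcd(c, r) = 1$: any prime dividing both $c$ and $r$ would divide $v$ (as $c \mid v$) and also divide $v - 1$ (as $r \mid v - 1$), which is impossible. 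I expect this coprimality fact to be the crux, since it is exactly what permits cancellation of the centralizer factor in part (2).

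For part (1), I would reduce the first identity of Lemma~\ref{lem:Yoshiara1} modulo $r$: using $(s+1)(t+1) \equiv 1$ and $s+t \equiv 0 \pmod r$ gives
\[|a^G \cap \Delta|\,|C_G(a)| \equiv 1 \pmod r.\]
If $r > 1$, this rules out $|a^G \cap \Delta| = 0$, for otherwise the left-hand side would vanish and force $1 \equiv 0 \pmod r$. Hence $a^G \cap \Delta \neq \varnothing$.

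For part (2), I would reduce the second identity modulo $r$. Since $r \mid t$ and $r \mid (s+t)$, the right-hand side $t(s-1)(s+1) - (s+t)u_a$ is divisible by $r$, so
\[r \ \big|\ |a^G \cap \Delta^c| \cdot |C_G(a)|.\]
Invoking $\gcd(|C_G(a)|, r) = 1$ from the preliminary observation then lets me cancel the centralizer factor and conclude $r \mid |a^G \cap \Delta^c|$, with the case $|a^G \cap \Delta^c| = 0$ being the trivial multiple. The only subtlety is that, without the coprimality of $c$ and $r$, divisibility of the \emph{product} by $r$ would not transfer to the single factor $|a^G \cap \Delta^c|$; this is precisely why establishing $\gcd(|C_G(a)|, r) = 1$ is the essential ingredient, and it is the step I would take the most care over.
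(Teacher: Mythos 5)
Your proof is correct and takes essentially the same route as the paper: the paper derives this lemma ``almost immediately'' from Lemma~\ref{lem:Yoshiara1}, and your argument—reducing both product identities modulo $r$ and cancelling $|C_G(a)|$ via the coprimality $\gcd(|C_G(a)|,r)=1$, which follows from $v=(s+1)(st+1)\equiv 1 \pmod{r}$—is precisely the mechanism the paper itself uses when proving the generalization in Lemma~\ref{lem:main3}, where coprimality of $r$ with $|G|$ plays the identical role.
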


Lemma \ref{lem:Yoshiara2} follows almost immediately from Lemma \ref{lem:Yoshiara1}.  One of the main ingredients in the proof of Lemma \ref{lem:Yoshiara1} is Benson's Lemma.

\begin{lem}\cite[Lemma 4.3]{Benson}
\label{lem:Benson}
 If $x$ is an automorphism of a finite generalized quadrangle of order $(s,t)$, $d_0(x)$ denotes the number of fixed points of $x$, and $d_1(x)$ denotes the number of points that are sent to collinear points by $x$ (but are not fixed), then
 \[(t+1)d_0(x) + d_1(x) \equiv (st+1) \pmod {s+t}.\]
\end{lem}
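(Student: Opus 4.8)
The plan is to reduce the statement to a trace computation on the collinearity graph and then exploit the fact that any graph automorphism preserves each eigenspace of the adjacency matrix. Recall that the collinearity graph $\Gamma_{\calQ}$ of $\calQ$ is the strongly regular graph associated to a $\pg(s,t,1)$; by the formula recorded in Subsection \ref{sub:pggq} it has parameters $v = (s+1)(st+1)$, $k = s(t+1)$, $\lambda = s-1$, $\mu = t+1$, and a short computation with Lemma \ref{lem:SRGparameters} shows its nontrivial eigenvalues are $\nu_2 = s-1$ and $\nu_3 = -t-1$. Let $A$ be the adjacency matrix of $\Gamma_{\calQ}$ and let $P$ be the permutation matrix recording the action of $x$ on the point set $\calP$. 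Since $x$ preserves collinearity, $PA = AP$. The diagonal of $P$ counts fixed points and the diagonal of $PA$ counts points sent to a distinct collinear point, so $\operatorname{tr}(P) = d_0(x)$ and $\operatorname{tr}(PA) = d_1(x)$.

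Next I would decompose $\C^v$ into the three eigenspaces of $A$: the line $\langle \mathbf 1 \rangle$ for the eigenvalue $k$, and the spaces $V_2, V_3$ for $\nu_2 = s-1$ and $\nu_3 = -t-1$. Because $P$ commutes with $A$, it stabilizes each space; as $P$ is a permutation matrix it fixes $\mathbf 1$, contributing $1$ to $\operatorname{tr}(P)$ and $k$ to $\operatorname{tr}(PA)$. Writing $\sigma_2 = \operatorname{tr}(P|_{V_2})$ and $\sigma_3 = \operatorname{tr}(P|_{V_3})$, this gives
\[ d_0(x) = 1 + \sigma_2 + \sigma_3, \qquad d_1(x) = k + (s-1)\sigma_2 - (t+1)\sigma_3. \]

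The crucial point, and the step I expect to require the most care, is that $\sigma_2$ is an \emph{integer}. The projection $E_2$ onto $V_2$ equals $\frac{(A - kI)(A + (t+1)I)}{-(st+1)(s+t)}$, a matrix with rational entries, and $\sigma_2 = \operatorname{tr}(P E_2)$ since $P$ commutes with $E_2$; hence $\sigma_2 \in \Q$. On the other hand $\sigma_2$ is a sum of eigenvalues of the permutation matrix $P$, each a root of unity and so an algebraic integer, whence $\sigma_2$ is an algebraic integer. A rational algebraic integer is an ordinary integer, so $\sigma_2 \in \Z$.

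Finally I would substitute and simplify. Combining the two identities above yields
\[ (t+1)d_0(x) + d_1(x) = (t+1)(1 + \sigma_2 + \sigma_3) + s(t+1) + (s-1)\sigma_2 - (t+1)\sigma_3, \]
in which the $\sigma_3$ terms cancel and the expression collapses to $(s+1)(t+1) + (s+t)\sigma_2$. Reducing modulo $s+t$ kills the last summand because $\sigma_2 \in \Z$, while $(s+1)(t+1) = (st+1) + (s+t) \equiv st+1 \pmod{s+t}$, giving the claimed congruence $(t+1)d_0(x) + d_1(x) \equiv st+1 \pmod{s+t}$.
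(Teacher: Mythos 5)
Your proof is correct, and it is essentially the argument behind the cited result: the paper quotes Benson's Lemma from \cite{Benson} without reproducing a proof, but it explicitly attributes the result to the Higman-style technique of evaluating the trace (character) of an automorphism on the eigenspaces of the adjacency matrix, which is exactly your eigenspace decomposition, projection-matrix rationality, and algebraic-integer argument. Nothing further is needed.
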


Benson's Lemma is a specific instance of a technique, attributed to Graham Higman, which calculates the value of a character of the automorphism group of an association scheme on an eigenspace; see \cite[pp. 89--91]{CameronPerm}.  In particular, a generalization of Benson's Lemma has been proved by De Winter, Kamischke, and Wang for non-conference strongly regular graphs.

\begin{lem}\cite[Theorem 1]{DKW}
\label{lem:DKW}
Let $\Gamma$ be a $(v,k, \lambda, \mu)$-strongly regular graph whose adjacency matrix has integer eigenvalues $k, \nu_2$, and $\nu_3$, where $\nu_2 > \nu_3$.  If $x$ is a nontrivial automorphism of $\Gamma$ that such that $x$ fixes $d_0(x)$ vertices of $\Gamma$ and sends $d_1(x)$ vertices to adjacent vertices, then
\[k - \nu_3 \equiv -\nu_3 d_0(x)  + d_1(x) \pmod {(\nu_2 - \nu_3)}. \]
\end{lem}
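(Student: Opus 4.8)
The plan is to prove Lemma~\ref{lem:DKW} by applying the Graham Higman character-calculation technique directly to the adjacency matrix $A$ of $\Gamma$, mirroring the structure of Benson's Lemma but working on the eigenspace attached to $\nu_3$. First I would record the standard fact that, since $\Gamma$ is a non-conference strongly regular graph with integer eigenvalues, the vertex set decomposes into three $A$-eigenspaces, with $V_k$ the one-dimensional span of the all-ones vector and $V_{\nu_2}, V_{\nu_3}$ the remaining eigenspaces; the matrix $A$ acts as the scalar $\nu_3$ on $V_{\nu_3}$. The automorphism $x$ is realized by a permutation matrix $P$ commuting with $A$, so $P$ preserves each eigenspace, and the key quantity will be the trace of $P$ restricted to $V_{\nu_3}$.

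Next I would compute $\mathrm{tr}(P)$ and $\mathrm{tr}(PA)$ combinatorially. Since $P$ is the permutation matrix of $x$, $\mathrm{tr}(P) = d_0(x)$, the number of fixed vertices. For $\mathrm{tr}(PA)$, the diagonal entry in position $(i,i)$ counts the number of vertices adjacent to $i$ that are sent to $i$ by $x$; summing gives exactly the number of ordered pairs of adjacent vertices swapped-or-carried appropriately, which is $d_1(x)$, the count of vertices mapped to an adjacent (non-fixed) vertex. The aim is to re-express both traces using the eigenspace decomposition. Writing $m_2 = \dim V_{\nu_2}$ and $m_3 = \dim V_{\nu_3}$, and letting $t_i = \mathrm{tr}(P|_{V_{\nu_i}})$ for $i \in \{k, \nu_2, \nu_3\}$, one gets the two linear relations
\[
d_0(x) = \mathrm{tr}(P) = t_k + t_{\nu_2} + t_{\nu_3}, \qquad d_1(x) = \mathrm{tr}(PA) = k\, t_k + \nu_2\, t_{\nu_2} + \nu_3\, t_{\nu_3},
\]
using that $A$ is $\nu_i$ on $V_{\nu_i}$ and that $t_k = 1$ because $P$ fixes the all-ones vector.

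From here the argument is essentially a rationality/integrality observation. Eliminating $t_{\nu_2}$ from the two displayed equations yields $t_{\nu_3}$ as a linear combination of $d_0(x)$, $d_1(x)$, and constants with denominator $\nu_2 - \nu_3$; explicitly one solves to obtain $(\nu_2 - \nu_3)\, t_{\nu_3} = \nu_2 d_0(x) - d_1(x) - (\nu_2 - k)$ after substituting $t_k = 1$. Since $P$ has finite order and acts on the integral eigenlattice—or more carefully, since its eigenvalues on $V_{\nu_3}$ are roots of unity whose sum $t_{\nu_3}$ is an algebraic integer that is also rational—the trace $t_{\nu_3}$ is an ordinary integer. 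Reducing the eliminated relation modulo $\nu_2 - \nu_3$ and rearranging then gives the stated congruence $k - \nu_3 \equiv -\nu_3 d_0(x) + d_1(x) \pmod{\nu_2 - \nu_3}$; one simplifies using $k \equiv \nu_2 \pmod{\nu_2-\nu_3}$ is \emph{not} generally available, so instead I would track the constant term $\nu_2 - k$ carefully and note $\nu_2 \equiv \nu_3 \pmod{\nu_2 - \nu_3}$ to convert the $\nu_2$-coefficients into $\nu_3$-coefficients.

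The main obstacle I anticipate is the integrality of $t_{\nu_3}$: one must justify that the restricted trace is a genuine integer rather than merely a rational number. The clean way to see this is that $P$ permutes a basis and the eigenspace $V_{\nu_3}$ is defined over $\mathbb{Q}$ (indeed over $\mathbb{Z}$ after clearing denominators, since $\nu_3 \in \mathbb{Z}$), so $P|_{V_{\nu_3}}$ is conjugate to an integer matrix and its trace lies in $\mathbb{Z}$. Verifying that the relevant eigenspace projection is rational is exactly where the hypothesis that the eigenvalues are integers does the essential work, and this is the step I would present most carefully; the remaining manipulations are routine linear algebra and modular arithmetic.
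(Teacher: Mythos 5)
Lemma~\ref{lem:DKW} is quoted in this paper from \cite{DKW} without proof, so there is no internal argument to compare against; your write-up is a correct, self-contained reconstruction of the cited result, and it follows exactly the Graham Higman eigenspace-trace technique that the paper alludes to and that De Winter, Kamischke, and Wang themselves use. Your two trace identities, the elimination yielding $(\nu_2 - \nu_3)\,t_{\nu_3} = \nu_2 d_0(x) - d_1(x) - (\nu_2 - k)$, the integrality of $t_{\nu_3}$ (rational because the eigenspace is defined over $\mathbb{Q}$ when $\nu_3 \in \mathbb{Z}$, and an algebraic integer as a sum of roots of unity), and the final reduction via $\nu_2 \equiv \nu_3 \pmod{\nu_2 - \nu_3}$ are all sound; the only implicit hypothesis worth flagging is connectivity of $\Gamma$ (equivalently, that the eigenvalue $k$ has multiplicity one), which you invoke when setting $t_k = 1$ and which is the standard standing assumption for this statement.
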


\section{Generalizing Yoshiara's results to strongly regular graphs}
\label{sect:YoshSRG}

Throughout this section, we will assume that $\Gamma$ is a $(v,k,\lambda, \mu)$-strongly regular graph with eigenvalues $k$, $\nu_2$, and $\nu_3$, with $\nu_2 > \nu_3$.  We will assume that the group $G$ acts regularly on the vertex set of $\Gamma$, and, for a fixed vertex $\alpha$ of $\Gamma$, we let $\Delta = \{g \in G: \alpha^g \sim \alpha\} \cup \{1\}$.  For a nontrivial element $x \in G$, we define $d_1(x)$ to be the number of vertices sent to adjacent vertices by $x$. 

By replacing Benson's Lemma (Lemma \ref{lem:Benson}) with the generalization by De Winter, Kamischke, and Wang (Lemma \ref{lem:DKW}), we are able to prove a generalization of Lemma \ref{lem:Yoshiara1} for non-conference strongly regular graphs.  

\begin{lem}
 \label{lem:main1}
If $x$ is a nontrivial element of $G$, then 
\[d_1(x) = k - \nu_3 + u_x(\nu_2 - \nu_3) = \mu - \nu_3(\nu_2 + 1) + u_x(\nu_2 - \nu_3) = |x^G \cap \Delta| |C_G(x)|\]
for some integer $u_x$.  Furthermore, assuming $\mu > 0$, we have
\[|x^G \cap \Delta^c||C_G(x)| = \frac{\nu_2\nu_3(\nu_2 + 1)(\nu_3 + 1)}{\mu} - \nu_2(\nu_3 + 1) - u_x(\nu_2 - \nu_3).\]
\end{lem}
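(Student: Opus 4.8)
The plan is to apply Lemma~\ref{lem:DKW} (the De Winter--Kamischke--Wang generalization of Benson's Lemma) to the automorphism of $\Gamma$ induced by $x$, exploiting the fact that a regular action is fixed-point-free. Since $G$ acts regularly on the vertex set, the stabilizer of $\alpha$ is trivial, so the nontrivial element $x$ fixes no vertex; that is, $d_0(x) = 0$. Substituting this into Lemma~\ref{lem:DKW} gives $k - \nu_3 \equiv d_1(x) \pmod{\nu_2 - \nu_3}$, which I would rewrite as $d_1(x) = k - \nu_3 + u_x(\nu_2 - \nu_3)$ for some integer $u_x$. The second displayed expression then follows immediately from part~(2) of Lemma~\ref{lem:SRGparameters}, since $k - \nu_3 = (\mu - \nu_2\nu_3) - \nu_3 = \mu - \nu_3(\nu_2 + 1)$.

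The third equality is the combinatorial heart of the statement, and I would establish it by translating ``sent to an adjacent vertex'' into the group. Identifying each vertex with the unique group element carrying $\alpha$ to it, the vertex $\alpha^g$ is adjacent to its image $\alpha^{gx}$ if and only if $\alpha \sim \alpha^{gxg^{-1}}$ (applying the graph automorphism $g^{-1}$, which preserves adjacency), i.e.\ if and only if $gxg^{-1} \in \Delta$; note that $gxg^{-1} \neq 1$ automatically, as $x \neq 1$. Hence $d_1(x)$ counts the elements $g \in G$ with $gxg^{-1} \in \Delta$. As $g$ ranges over $G$, the element $gxg^{-1}$ sweeps out the conjugacy class $x^G$, each member being obtained exactly $|C_G(x)|$ times (the fibres of $g \mapsto gxg^{-1}$ are the left cosets of $C_G(x)$), so this count equals $|x^G \cap \Delta|\,|C_G(x)|$, as claimed.

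For the ``Furthermore'' clause, I would simply partition the conjugacy class. Since $x^G$ is the disjoint union of $x^G \cap \Delta$ and $x^G \cap \Delta^c$, and $|x^G|\,|C_G(x)| = |G| = v$ by orbit--stabilizer, we get $|x^G \cap \Delta^c|\,|C_G(x)| = v - |x^G \cap \Delta|\,|C_G(x)| = v - d_1(x)$. Substituting $d_1(x) = (k - \nu_3) + u_x(\nu_2 - \nu_3)$ together with the formula $v = (k - \nu_2)(k - \nu_3)/\mu$ from part~(3) of Lemma~\ref{lem:SRGparameters} (valid since $\mu > 0$), the claimed identity reduces to verifying $v - (k - \nu_3) = \nu_2\nu_3(\nu_2+1)(\nu_3+1)/\mu - \nu_2(\nu_3+1)$, which one checks by clearing the denominator $\mu$ and repeatedly using $k = \mu - \nu_2\nu_3$.

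I expect the only real friction to be bookkeeping rather than insight: getting the direction of the conjugation correct in the adjacency translation (so that the relevant set is genuinely $x^G \cap \Delta$ and not its image under some other map), and carrying out the final algebraic identity cleanly. Both are mechanical once the fixed-point-free reduction $d_0(x) = 0$ and the orbit--stabilizer counting are in place.
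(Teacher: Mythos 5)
Your proposal is correct and follows essentially the same route as the paper's own proof: apply Lemma~\ref{lem:DKW} with $d_0(x)=0$ (regularity), rewrite via Lemma~\ref{lem:SRGparameters}(2), count $d_1(x)$ by translating adjacency into the condition $gxg^{-1}\in\Delta$ with fibres given by cosets of $C_G(x)$, and obtain the complement formula from $|x^G|\,|C_G(x)|=v=(k-\nu_2)(k-\nu_3)/\mu$. The algebraic identity you defer to at the end does check out, so there is no gap.
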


\begin{proof}
 Let $\Gamma$ be such a $(v,k,\lambda, \mu)$-strongly regular graph with regular group of automorphisms $G$, and let $x$ be a nontrivial element of $G$.  Since $x$ does not fix any vertices, it follows by Lemma \ref{lem:DKW} that, for some integer $u_x$, 
 \[ d_1(x) = k - \nu_3 + u_x(\nu_2 - \nu_3).\]  Equivalently, using Lemma \ref{lem:SRGparameters} (2), we see that
 \[ d_1(x) = \mu - \nu_3(\nu_2 + 1) + u_x(\nu_2 - \nu_3).\]
 Now, since $G$ acts regularly on the vertices of $\Gamma$, for a fixed vertex $\alpha$, we have $V(\Gamma) = \{ \alpha^g : g \in G\}.$  Suppose $\alpha^g$ is sent to an adjacent vertex by $x$, i.e., suppose $\alpha^{gx} \sim \alpha^g$.  This happens if and only if $\alpha^{gxg^{-1}} \sim \alpha$, which implies that $gxg^{-1} \in x^G \cap \Delta$.  Since $gxg^{-1} = hxh^{-1}$, where $g,h \in G$, if and only if $g^{-1}h \in C_G(x)$, we see that $d_1(x) = |x^G \cap \Delta||C_G(x)|$.  Finally, since $|x^G \cap \Delta^c| = |x^G| - |x^G \cap \Delta|$ and 
 \[|x^G||C_G(x)| = v = \frac{(k - \nu_2)(k-\nu_3)}{\mu} = \frac{(\mu - \nu_2 \nu_3 - \nu_3)(\mu - \nu_2\nu_3 - \nu_3)}{\mu}, \]
 we have 
 \[ |x^G \cap \Delta^c||C_G(x)| = \frac{\nu_2\nu_3(\nu_2 + 1)(\nu_3 + 1)}{\mu} - \nu_2(\nu_3 + 1) - u_x(\nu_2 - \nu_3),\]
 as desired.
\end{proof}

\begin{lem}
 \label{lem:main2}
 If $(\nu_2 - \nu_3)$ does not divide $\mu - \nu_3(\nu_2 + 1)$ and $x$ is a nontrivial element of $G$, then $x^G \cap \Delta \neq \varnothing$.
\end{lem}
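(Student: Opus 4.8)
The plan is to argue by contraposition, using only the counting identity already established in Lemma \ref{lem:main1}. The quantity whose nonvanishing we want is $|x^G \cap \Delta|$, and the key observation is that Lemma \ref{lem:main1} supplies two expressions for the same integer $d_1(x)$: on the one hand $d_1(x) = |x^G \cap \Delta|\,|C_G(x)|$, and on the other hand $d_1(x) = \mu - \nu_3(\nu_2 + 1) + u_x(\nu_2 - \nu_3)$ for some integer $u_x$. The second expression pins down $d_1(x)$ modulo $(\nu_2 - \nu_3)$ to be congruent to $\mu - \nu_3(\nu_2+1)$, so the divisibility hypothesis should force $d_1(x) \neq 0$, and hence $|x^G \cap \Delta| \neq 0$.

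Concretely, I would first assume for contradiction that $x^G \cap \Delta = \varnothing$, so that $|x^G \cap \Delta| = 0$. The factored expression for $d_1(x)$ then gives $d_1(x) = 0$; here I use only that $|C_G(x)| \ge 1$, which is immediate since $C_G(x)$ always contains the identity. Substituting $d_1(x) = 0$ into the second expression yields $\mu - \nu_3(\nu_2 + 1) = -u_x(\nu_2 - \nu_3)$, and since $u_x \in \Z$ this says precisely that $(\nu_2 - \nu_3)$ divides $\mu - \nu_3(\nu_2 + 1)$, contradicting the hypothesis of the lemma.

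I do not anticipate a serious obstacle: all of the spectral and combinatorial content has already been absorbed into Lemma \ref{lem:main1}, so this is essentially a one-line congruence deduction, exactly paralleling the way the text notes that Lemma \ref{lem:Yoshiara2} follows almost immediately from Lemma \ref{lem:Yoshiara1}. The only point meriting a word of care is the implication $d_1(x) = 0 \Rightarrow |x^G \cap \Delta| = 0$, which rests on $|C_G(x)|$ being a positive integer rather than zero; once that is noted, the argument is complete.
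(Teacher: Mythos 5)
Your proof is correct and follows essentially the same route as the paper: assume $x^G \cap \Delta = \varnothing$, use the two expressions for $d_1(x)$ from Lemma \ref{lem:main1} to get $\mu - \nu_3(\nu_2+1) + u_x(\nu_2 - \nu_3) = 0$, and contradict the divisibility hypothesis. (Your closing caveat is about the implication $d_1(x) = 0 \Rightarrow |x^G \cap \Delta| = 0$, but the direction you actually use, $|x^G \cap \Delta| = 0 \Rightarrow d_1(x) = 0$, is trivial and needs no positivity assumption at all.)
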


\begin{proof}
 If $x^G \cap \Delta = \varnothing$, then, by Lemma \ref{lem:main1}, 
 \[\mu - \nu_3(\nu_2 + 1) + u_x(\nu_2 - \nu_3) = 0. \]
 The result follows.
\end{proof}

\begin{lem}
 \label{lem:main3}
 Let $x$ be any nontrivial element of $G$, let $r = \gcd(\nu_2(\nu_3 + 1), \nu_3(\nu_2 + 1))$, and assume $\gcd(r, \mu) = 1$.  Then, the following hold.
 \begin{itemize}
  \item[(1)] If $r > 1$, then $x^G \cap \Delta \neq \varnothing$.
  \item[(2)] $|x^G \cap \Delta^c|$ is a multiple of $r$ (possibly equal to $0$).
 \end{itemize}
\end{lem}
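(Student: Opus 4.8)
The plan is to extract the divisibility statement (2) directly from the formula for $|x^G \cap \Delta^c|\,|C_G(x)|$ given in Lemma~\ref{lem:main1}, and then to deduce (1) from (2) together with a single coprimality observation. Writing $N = |x^G \cap \Delta^c|$ and $C = |C_G(x)|$, Lemma~\ref{lem:main1} gives
\[ NC = \frac{\nu_2\nu_3(\nu_2+1)(\nu_3+1)}{\mu} - \nu_2(\nu_3+1) - u_x(\nu_2-\nu_3). \]
The first step is to show that each of the three terms on the right is divisible by $r$, so that $r \mid NC$; the genuine work is then to strip off the factor $C$, which I expect to be the main obstacle.

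For the three terms, note that by definition $r$ divides $\nu_2(\nu_3+1)$, which kills the middle term, while the identity $\nu_2(\nu_3+1) - \nu_3(\nu_2+1) = \nu_2 - \nu_3$ shows $r \mid (\nu_2-\nu_3)$ and kills the last term. For the first term, I would set $M = \nu_2\nu_3(\nu_2+1)(\nu_3+1)/\mu$, which is an integer since it equals $v - \mu + \nu_2(\nu_3+1) + \nu_3(\nu_2+1)$ by Lemma~\ref{lem:SRGparameters}. Then $\mu M = \nu_2\nu_3(\nu_2+1)(\nu_3+1)$ is divisible by $r$ because $\nu_2(\nu_3+1)$ is a factor, and since $\gcd(r,\mu)=1$ it follows that $r \mid M$. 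Hence $r \mid NC$.

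The main obstacle is passing from $r \mid NC$ to $r \mid N$, i.e.\ showing $\gcd(r, C) = 1$. The key point is that $C = |C_G(x)|$ divides $|G| = v$, so it suffices to prove $\gcd(r,v) = 1$. I would compute $v$ modulo $r$ from $\mu v = (k-\nu_2)(k-\nu_3)$ together with $k - \nu_2 = \mu - \nu_2(\nu_3+1)$ and $k - \nu_3 = \mu - \nu_3(\nu_2+1)$ (both consequences of Lemma~\ref{lem:SRGparameters}(2)); reducing mod $r$ gives $\mu v \equiv \mu^2 \pmod r$, and since $\gcd(r,\mu)=1$ this yields $v \equiv \mu \pmod r$, whence $\gcd(r,v) = \gcd(r,\mu) = 1$. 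Therefore $\gcd(r,C)=1$, and $r \mid NC$ forces $r \mid N$, establishing (2).

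Finally, (1) follows from (2): if $x^G \cap \Delta = \varnothing$, then $d_1(x) = |x^G\cap\Delta|\,C = 0$, so (as $x$ has no fixed vertices) every vertex is sent to a non-adjacent vertex and $N = |x^G|$, giving $NC = |x^G|\,|C_G(x)| = v$. By (2) we have $r \mid N$, hence $r \mid NC = v$; but $\gcd(r,v) = 1$ and $r > 1$, a contradiction. Thus $x^G \cap \Delta \neq \varnothing$.
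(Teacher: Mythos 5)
Your proof is correct and takes essentially the same route as the paper's: both rest on the formulas of Lemma~\ref{lem:main1}, the identity $\nu_2 - \nu_3 = \nu_2(\nu_3+1) - \nu_3(\nu_2+1)$, the deduction that $\gcd(r,\mu)=1$ forces $r$ to divide $\nu_2\nu_3(\nu_2+1)(\nu_3+1)/\mu$, and the observation that $v \equiv \mu \pmod{r}$, so that $r$ is coprime to $|G|$ and hence to $|C_G(x)|$. The only difference is organizational: the paper proves (1) directly (an empty intersection $x^G \cap \Delta$ forces $r \mid \mu$ via the first formula of Lemma~\ref{lem:main1}), whereas you prove (2) first and then obtain (1) as a consequence; both deductions are valid.
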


\begin{proof}
 We will prove (1) first.  If $x^G \cap \Delta = \varnothing$, then, by Lemma \ref{lem:main1}, it follows that
 \[ 0 = |x^G \cap \Delta| |C_G(x)| = \mu - \nu_3(\nu_2 + 1) + u_x(\nu_2 - \nu_3)\]
 for some integer $u_x$.  Noting that $\nu_2 - \nu_3 = \nu_2(\nu_3 + 1) - \nu_3(\nu_2 + 1)$, we have $r$ divides $\mu$.  However, by assumption, $\mu$ is coprime to $r$ and $r > 1$, a contradiction.  Hence, $x^G \cap \Delta \neq \varnothing$.
 
 To prove (2), we note first that, since $\gcd(r, \mu) = 1$, $r$ divides $\nu_2 \nu_3 (\nu_2 + 1)(\nu_3 + 1)/\mu$.  Since $r$ divides both $\nu_2(\nu_3 + 1)$ and $\nu_2 - \nu_3$, it follows from Lemma \ref{lem:main1} that $|x^G \cap \Delta^c| |C_G(x)|$ is a multiple of $r$.  On the other hand,
 \[|G| = v = \mu - \nu_2(\nu_3 + 1) - \nu_3(\nu_2 + 1) + \frac{\nu_2 \nu_3 (\nu_2 + 1)(\nu_3 + 1)}{\mu}. \]
 Since $r$ is coprime to $\mu$ but divides each other term, $r$ is coprime to $|G|$ and hence also to $|C_G(x)|$, implying that $r$ divides $|x^G \cap \Delta^c|$, as desired.
\end{proof}

\section{An application to partial geometries}
\label{sect:pg}

Groups acting on partial geometries have been studied before, see \cite{DeWinter, LMS, TTV}.  In particular, an analogue of Benson's Lemma was proved for partial geometries in \cite{DeWinter}, and the following result was obtained for abelian groups acting regularly on the point set of a partial geometry.

\begin{lem}\cite[Corollary 2.3]{DeWinter}
 If $\calS$ is a partial geometry $\pg(s,t, \alpha)$, $\alpha \neq s+1$, and $\calS$ admits an abelian point-regular group of automorphisms, then
 \[(s+1)\frac{st+\alpha}{\alpha} \equiv (s+1)(t+1) \equiv 0 \pmod {s+t - \alpha + 1}. \]
\end{lem}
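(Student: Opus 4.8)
The plan is to apply the machinery of Section~\ref{sect:YoshSRG} to the collinearity graph $\Gamma_{\calS}$, which by Subsection~\ref{sub:pggq} is the $\left((s+1)(st+\alpha)/\alpha,\ s(t+1),\ s-1+t(\alpha-1),\ \alpha(t+1)\right)$-strongly regular graph. First I would compute its nontrivial eigenvalues. Using Lemma~\ref{lem:SRGparameters}(2),(4), one has $\nu_2\nu_3 = \mu - k = (t+1)(\alpha - s)$ and $\nu_2 + \nu_3 = \lambda - \mu = s - t - \alpha - 1$, so that $\nu_2 = s - \alpha$ and $\nu_3 = -(t+1)$. Two identities then fall out that match the quantities in the statement:
\[ \nu_2 - \nu_3 = s + t - \alpha + 1, \qquad k - \nu_3 = s(t+1) + (t+1) = (s+1)(t+1). \]
Since $\nu_2$ and $\nu_3$ are manifestly integers, $\Gamma_{\calS}$ is never a conference graph, and Lemma~\ref{lem:main1} applies with modulus $\nu_2 - \nu_3 = s+t-\alpha+1$.

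Next I would exploit that $G$ is abelian and point-regular. For a nontrivial $x \in G$ we have $x^G = \{x\}$ and $C_G(x) = G$, so $|C_G(x)| = v$, and Lemma~\ref{lem:main1} collapses to
\[ d_1(x) = (k - \nu_3) + u_x(\nu_2 - \nu_3) = |x^G \cap \Delta|\, v, \]
where $|x^G \cap \Delta| \in \{0,1\}$ according to whether $x \in \Delta^c$ or $x \in \Delta \setminus \{1\}$. This yields a clean dichotomy: every nontrivial element sends either all $v$ vertices or no vertices to adjacent vertices.

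To extract the two congruences I would pick $x$ in each part of the partition. Since $G$ acts regularly, $\Delta \setminus \{1\}$ is in bijection with the $k = s(t+1) > 0$ neighbours of the base vertex, so it is nonempty; taking such an $x$ gives $d_1(x) = v$, whence
\[ v = (k - \nu_3) + u_x(\nu_2 - \nu_3) \equiv k - \nu_3 = (s+1)(t+1) \pmod{s+t-\alpha+1}, \]
which is the first congruence in the chain. For the second I would take a nontrivial $x \in \Delta^c$; such an element exists because, by Lemma~\ref{lem:SRGparameters}(1), $|\Delta^c| = v - k - 1 = k(k-\lambda-1)/\mu$, and $k - \lambda - 1 = t(s+1-\alpha) > 0$ precisely when $\alpha \neq s+1$ (using $\alpha \le s+1$ in any partial geometry). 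For this $x$ we get $d_1(x) = 0$, whence $(s+1)(t+1) = k - \nu_3 = -u_x(\nu_2 - \nu_3) \equiv 0 \pmod{s+t-\alpha+1}$. Combining the two lines gives $(s+1)(st+\alpha)/\alpha = v \equiv (s+1)(t+1) \equiv 0$, as required.

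I expect the only genuinely delicate point to be the bookkeeping around the degenerate case: one must confirm that the hypothesis $\alpha \neq s+1$ is exactly what forces $k - \lambda - 1 = t(s+1-\alpha) > 0$, hence $v > k+1$ and $\Delta^c \neq \varnothing$, so that the congruence $(s+1)(t+1) \equiv 0$ is not vacuous (when $\alpha = s+1$ the graph $\Gamma_{\calS}$ is complete and the statement degenerates). Everything else reduces to direct substitution of the computed eigenvalues into Lemma~\ref{lem:main1}.
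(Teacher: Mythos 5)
Your proof is correct, but note that the paper never proves this statement at all: it is quoted as background from \cite{DeWinter}, where it is obtained from a Benson-type congruence proved directly for partial geometries (an eigenvalue argument on the point--line incidence structure). What you have done instead is rederive it from the paper's own machinery, and your argument is in substance Proposition \ref{prop:pg}(1) --- i.e., Lemma \ref{lem:main1} applied to the collinearity graph, whose eigenvalues you correctly compute as $\nu_2 = s-\alpha$ and $\nu_3 = -(t+1)$, so that $\nu_2 - \nu_3 = s+t-\alpha+1$ and $k - \nu_3 = (s+1)(t+1)$ --- specialized to an abelian group, where $x^G = \{x\}$ and $|C_G(x)| = v$ force the dichotomy $d_1(x) \in \{0, v\}$. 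Taking $x \in \Delta \setminus \{1\}$ (nonempty since $k = s(t+1) > 0$) gives $v \equiv (s+1)(t+1)$, and taking a nontrivial $x \in \Delta^c$ (nonempty precisely because $\alpha \neq s+1$ makes $v - k - 1 = st(s+1-\alpha)/\alpha > 0$) gives $(s+1)(t+1) \equiv 0$, both modulo $s+t-\alpha+1$; chaining these yields the stated congruences, and your handling of the degenerate case $\alpha = s+1$ is exactly the right bookkeeping. The one slip is your claim that integer eigenvalues imply $\Gamma_{\calS}$ is not a conference graph: that implication is false (a conference graph on a square number of vertices, e.g.\ the Paley graph on $9$ vertices, has integer eigenvalues $1$ and $-2$). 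It is harmless here, because Lemma \ref{lem:DKW}, and hence Lemma \ref{lem:main1}, requires only integrality of $\nu_2$ and $\nu_3$, which your computation establishes; you should invoke that hypothesis directly rather than routing it through the conference-graph condition.
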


In \cite{TTV}, the following result is proved.

\begin{lem}\cite[Corollary 4]{TTV}
 Let $\calS$ be a partial geometry of order $(s, t, \alpha)$, and let $x$ be an automorphism of $\calS$.  If $s$, $t$, and $\alpha - 1$ have a common divisor distinct from $1$, then there exists at least one fixed point or at least one point which is mapped to a point collinear to itself.
\end{lem}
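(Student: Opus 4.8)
The plan is to recognize this as a direct consequence of the De Winter--Kamischke--Wang congruence (Lemma \ref{lem:DKW}) applied to the collinearity graph $\Gamma_{\calS}$. An automorphism $x$ of the incidence structure $\calS$ induces an automorphism of $\Gamma_{\calS}$, and the quantities $d_0(x)$ (fixed points) and $d_1(x)$ (points sent to collinear points) are exactly the graph-theoretic quantities appearing in Lemma \ref{lem:DKW}. If $x$ is trivial then every point is fixed and the conclusion is immediate, so I may assume $x \neq 1$. The first step is therefore to compute the eigenvalues of $\Gamma_{\calS}$. Using the parameters $(v,k,\lambda,\mu) = ((s+1)(st+\alpha)/\alpha,\ s(t+1),\ s-1+t(\alpha-1),\ \alpha(t+1))$ recorded above, one checks via Lemma \ref{lem:SRGparameters}(2),(4) that the nonprincipal eigenvalues are $\nu_2 = s - \alpha$ and $\nu_3 = -(t+1)$; in particular both are integers, so Lemma \ref{lem:DKW} applies, and $\nu_2 - \nu_3 = s + t - \alpha + 1 > 0$ (using $\alpha \le t+1$) confirms the ordering $\nu_2 > \nu_3$.

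With these values, I would substitute into Lemma \ref{lem:DKW}. Since $k - \nu_3 = (s+1)(t+1)$ and $-\nu_3 = t+1$, the congruence becomes
\[(s+1)(t+1) \equiv (t+1)\,d_0(x) + d_1(x) \pmod{s+t-\alpha+1}.\]
The key observation is that the hypothesis furnishes an integer $r > 1$ dividing each of $s$, $t$, and $\alpha - 1$, and hence $r$ divides the modulus $s + t - \alpha + 1 = s + t - (\alpha-1)$. Thus the displayed congruence may be reduced modulo $r$.

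Reducing modulo $r$, and using $s \equiv t \equiv 0$ and $\alpha \equiv 1 \pmod r$, the left side becomes $(s+1)(t+1) \equiv 1$ and the coefficient $t+1 \equiv 1$, so the congruence collapses to $1 \equiv d_0(x) + d_1(x) \pmod r$. If there were neither a fixed point nor a point mapped to a collinear point, then $d_0(x) = d_1(x) = 0$, forcing $1 \equiv 0 \pmod r$ and contradicting $r > 1$. Hence $d_0(x) + d_1(x) \ge 1$, which is exactly the desired conclusion.

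The argument is essentially a bookkeeping exercise once the eigenvalues are in hand, so there is no serious obstacle; the points requiring care are, first, verifying the eigenvalue formulas $\nu_2 = s-\alpha$ and $\nu_3 = -(t+1)$ so that the modulus $\nu_2 - \nu_3$ equals $s+t-\alpha+1$ and is an integer (both needed for Lemma \ref{lem:DKW}), and second, tracking the off-by-one in writing the common divisor condition as $r \mid (\alpha - 1)$ rather than $r \mid \alpha$, which is precisely what makes $r$ divide $s + t - \alpha + 1$ and makes the terms reduce correctly. As a sanity check, the generalized quadrangle case $\alpha = 1$ makes $\alpha - 1 = 0$, so the hypothesis becomes $\gcd(s,t) > 1$, recovering Lemma \ref{lem:Yoshiara2}(1).
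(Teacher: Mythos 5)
Your proposal is correct, but note that the paper never proves this statement at all: it is quoted as background from \cite{TTV}, so there is no internal proof to compare against, and what you have supplied is an independent proof built from the paper's own toolkit. Your eigenvalue computation checks out: $\nu_2 = s-\alpha$ and $\nu_3 = -(t+1)$ satisfy $\mu - \nu_2\nu_3 = \alpha(t+1) + (s-\alpha)(t+1) = s(t+1) = k$ and $\mu + \nu_2 + \nu_3 = s - 1 + t(\alpha - 1) = \lambda$, which pins them down via Lemma \ref{lem:SRGparameters}(2),(4); the modulus in Lemma \ref{lem:DKW} is then $s+t-\alpha+1$, a common divisor $r>1$ of $s$, $t$, $\alpha-1$ divides it, and the reduction to $1 \equiv d_0(x) + d_1(x) \pmod{r}$ is exactly right. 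The comparison worth making is with the paper's own Section \ref{sect:YoshSRG} and Proposition \ref{prop:pg}: those results use the same key lemma (Lemma \ref{lem:DKW}) and the same arithmetic idea, but they are formulated only for nontrivial elements of a point-regular group, where fixed-point-freeness forces $d_0(x) = 0$ and the conclusion is phrased in terms of $x^G \cap \Delta$; your proof retains the $d_0(x)$ term, which is precisely what lets it handle an arbitrary automorphism, as the statement from \cite{TTV} requires. So your route is the natural ``automorphism-level'' version of the paper's ``regular-group-level'' arguments, and your sanity check recovering Lemma \ref{lem:Yoshiara2}(1) when $\alpha = 1$ is apt. One pedantic point to make the write-up airtight: in the degenerate case $\alpha = s+1$ the collinearity graph is complete ($k = v-1$), so it is not a strongly regular graph in the sense required by Lemma \ref{lem:DKW}; but there the conclusion is immediate, since any point not fixed by $x$ is sent to a collinear point, so a one-line remark disposes of it.
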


We are able to apply Lemmas \ref{lem:main1}, \ref{lem:main2}, and \ref{lem:main3} to obtain similar results.  Note that $d_1$ and $\Delta$ are as defined in Section \ref{sect:YoshSRG}.

\begin{prop}
 \label{prop:pg}
 Let $G$ act regularly on the point set of a partial geometry $\calS$ of order $(s,t,\alpha)$.  If $x$ is a nontrivial element of $G$, then the following hold.
 
\begin{itemize}
 \item[(1)] \[d_1(x) = (s+1)(t+1) + u_x(s + t - \alpha + 1) = |x^G \cap \Delta||C_G(x)|,\]
 where $u_x$ is an integer, and, furthermore,
 \[|x^G \cap \Delta^c||C_G(x)| = \frac{t(s+1)(s - \alpha)}{\alpha} - u_x(s + t - \alpha + 1). \]
 \item[(2)] If $s + t - \alpha + 1$ does not divide $(s+1)(t+1)$, then $x^G \cap \Delta \neq \varnothing$.
 \item[(3)] Let $r = \gcd(t(s - \alpha), s + t - \alpha + 1)$, and suppose $r > 1$ and coprime with $\alpha(t+1)$.  Then $x^G \cap \Delta \neq \varnothing$ and $|x^G \cap \Delta^c|$ is a multiple of $r$ (possibly equal to $0$).
\end{itemize}
\end{prop}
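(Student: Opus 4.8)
The plan is to specialize the general strongly regular graph results of Section \ref{sect:YoshSRG} to the collinearity graph $\Gamma_{\calS}$, which by Subsection \ref{sub:pggq} is an $((s+1)(st+\alpha)/\alpha,\, s(t+1),\, s-1+t(\alpha-1),\, \alpha(t+1))$-strongly regular graph. Since $G$ acts regularly on the points of $\calS$, it acts regularly on the vertices of $\Gamma_{\calS}$, so Lemmas \ref{lem:main1}, \ref{lem:main2}, and \ref{lem:main3} all apply, and every clause of the proposition will follow by substituting the correct eigenvalues and simplifying.

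First I would identify the eigenvalues of $\Gamma_{\calS}$. Reading off $k = s(t+1)$, $\lambda = s-1+t(\alpha-1)$, and $\mu = \alpha(t+1)$, and using Lemma \ref{lem:SRGparameters}(2),(4), one finds $\nu_2 + \nu_3 = \lambda - \mu = s - t - \alpha - 1$ and $\nu_2 \nu_3 = \mu - k = -(s-\alpha)(t+1)$, which forces $\nu_2 = s - \alpha$ and $\nu_3 = -(t+1)$ (here $\nu_2 > \nu_3$ because $s+t-\alpha+1 > 0$, and these eigenvalues are integers, so Lemma \ref{lem:DKW} and the section hypotheses are in force). The single most useful consequence is $\nu_2 - \nu_3 = s+t-\alpha+1$, which becomes the modulus throughout.

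For (1), I would substitute into Lemma \ref{lem:main1}: the computation $k - \nu_3 = s(t+1)+(t+1) = (s+1)(t+1)$ gives the first displayed equation, and $d_1(x) = |x^G \cap \Delta||C_G(x)|$ is inherited verbatim. For the second equation I would plug $\nu_2+1 = s-\alpha+1$, $\nu_3+1 = -t$, and $\mu = \alpha(t+1)$ into the formula for $|x^G \cap \Delta^c||C_G(x)|$ from Lemma \ref{lem:main1}; cancelling the common factor $(t+1)$ and combining the two terms over the denominator $\alpha$, the bracket $\tfrac{s-\alpha+1}{\alpha}+1$ collapses to $\tfrac{s+1}{\alpha}$, producing $t(s+1)(s-\alpha)/\alpha$. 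For (2), I would note $\mu - \nu_3(\nu_2+1) = k - \nu_3 = (s+1)(t+1)$, so Lemma \ref{lem:main2} gives the statement directly.

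The only step requiring more than substitution is matching the quantity $r$ in (3) with $\gcd(\nu_2(\nu_3+1), \nu_3(\nu_2+1))$ from Lemma \ref{lem:main3}. Here I would use the identity $\nu_2(\nu_3+1) - \nu_3(\nu_2+1) = \nu_2 - \nu_3$ (already exploited in the proof of Lemma \ref{lem:main3}) together with the elementary fact $\gcd(A,B) = \gcd(A, A-B)$ to obtain
\[ \gcd\big(\nu_2(\nu_3+1),\, \nu_3(\nu_2+1)\big) = \gcd\big(\nu_2(\nu_3+1),\, \nu_2 - \nu_3\big) = \gcd\big(t(s-\alpha),\, s+t-\alpha+1\big), \]
using $|\nu_2(\nu_3+1)| = t(s-\alpha)$. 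The hypothesis $\gcd(r,\mu)=1$ then reads exactly as ``$r$ coprime with $\alpha(t+1)$,'' and both conclusions of Lemma \ref{lem:main3} translate directly. I expect this gcd reconciliation to be the main (and essentially only) obstacle; everything else is substitution and cancellation.
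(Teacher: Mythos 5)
Your proposal is correct and follows exactly the paper's approach: the paper's proof simply observes that the collinearity graph is an $((s+1)(st+\alpha)/\alpha,\, s(t+1),\, s-1+t(\alpha-1),\, \alpha(t+1))$-strongly regular graph and invokes Lemmas \ref{lem:main1}, \ref{lem:main2}, and \ref{lem:main3}. Your version supplies the eigenvalue computation ($\nu_2 = s-\alpha$, $\nu_3 = -(t+1)$), the algebraic simplifications, and the gcd reconciliation via $\nu_2(\nu_3+1)-\nu_3(\nu_2+1)=\nu_2-\nu_3$, all of which the paper leaves implicit in the word ``immediately.''
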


\begin{proof}
 Noting that the collinearity graph of $\calS$ is an $((s+1)(st + \alpha)/\alpha, s(t+1), s - 1 + t(\alpha - 1), \alpha(t+1))$-strongly regular graph, the results for (1), (2), and (3) follow immediately from Lemmas \ref{lem:main1}, \ref{lem:main2}, and \ref{lem:main3}, respectively.
\end{proof}

\section{New restrictions on parameters of partial difference sets}
\label{sect:rest}

While partial geometries and generalized quadrangles do not have natural complements, strongly regular graphs do.  Moreover, since a group of automorphisms preserves both edges and non-edges, a group acting regularly on a strongly regular graph will also act regularly on its complement.  Immediately, this yields the following result.  We keep the notation from Section \ref{sect:YoshSRG} for the following results.

\begin{prop}
 \label{prop:comp}
 Let $x$ be a nontrivial element of $G$, which acts regularly on the non-conference $(v,k, \lambda, \mu)$-strongly regular graph $\Gamma$.  If $\nu_2 - \nu_3$ does not divide either of $\mu - \nu_3(\nu_2 + 1)$ or $v - 2k + \lambda - \nu_3(\nu_2 + 1)$, then $x^G \cap \Delta \neq \varnothing$ and $x^G \cap \Delta^c \neq \varnothing$.   
\end{prop}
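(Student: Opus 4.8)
The plan is to apply Lemma \ref{lem:main2} twice: once to $\Gamma$ itself and once to its complement. The first conclusion, $x^G \cap \Delta \neq \varnothing$, is immediate. Since by hypothesis $\nu_2 - \nu_3$ does not divide $\mu - \nu_3(\nu_2+1)$, Lemma \ref{lem:main2} applies directly to $\Gamma$ and yields $x^G \cap \Delta \neq \varnothing$.

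For the second conclusion I would pass to the complementary graph $\overline{\Gamma}$. By Lemma \ref{lem:SRGparameters}(5), $\overline{\Gamma}$ is a $(v,\, v-k-1,\, v-2k+\mu-2,\, v-2k+\lambda)$-strongly regular graph with eigenvalues $\overline{\nu_2} = -\nu_3 - 1$ and $\overline{\nu_3} = -\nu_2 - 1$; since $\nu_2 > \nu_3$ these satisfy $\overline{\nu_2} > \overline{\nu_3}$, so $\overline{\Gamma}$ is again non-conference and the hypotheses of Lemma \ref{lem:main2} are in force. Because a group of automorphisms preserves both adjacency and non-adjacency, $G$ acts regularly on $\overline{\Gamma}$ as well. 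Writing $\overline{\Delta} = \{g \in G : \alpha^g \sim_{\overline{\Gamma}} \alpha\} \cup \{1\}$ for the corresponding set, the crux is the observation that, since $G$ acts regularly, for $g \neq 1$ we have $\alpha^g \sim_{\overline{\Gamma}} \alpha$ exactly when $\alpha^g \not\sim_{\Gamma} \alpha$; hence $\overline{\Delta} = \Delta^c \cup \{1\}$. As $x$ is nontrivial, $1 \notin x^G$, and therefore $x^G \cap \overline{\Delta} = x^G \cap \Delta^c$.

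It then suffices to show $x^G \cap \overline{\Delta} \neq \varnothing$ by invoking Lemma \ref{lem:main2} for $\overline{\Gamma}$. The two quantities entering that lemma are $\overline{\nu_2} - \overline{\nu_3}$ and $\overline{\mu} - \overline{\nu_3}(\overline{\nu_2} + 1)$, which I would rewrite in terms of the original parameters. A short computation gives $\overline{\nu_2} - \overline{\nu_3} = \nu_2 - \nu_3$ and, using $\overline{\mu} = v - 2k + \lambda$ together with $\overline{\nu_3}(\overline{\nu_2}+1) = (-\nu_2-1)(-\nu_3) = \nu_3(\nu_2+1)$, that $\overline{\mu} - \overline{\nu_3}(\overline{\nu_2}+1) = v - 2k + \lambda - \nu_3(\nu_2+1)$. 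Thus the hypothesis that $\nu_2 - \nu_3$ does not divide $v - 2k + \lambda - \nu_3(\nu_2+1)$ is exactly the non-divisibility hypothesis of Lemma \ref{lem:main2} applied to $\overline{\Gamma}$, so $x^G \cap \overline{\Delta} \neq \varnothing$, and hence $x^G \cap \Delta^c \neq \varnothing$.

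The only real work is the bookkeeping in these final steps: verifying that complementation preserves the eigenvalue gap $\nu_2 - \nu_3$ and that the second divisibility expression in the statement is precisely the image of $\mu - \nu_3(\nu_2+1)$ under the substitution $\nu_2 \mapsto -\nu_3 - 1$, $\nu_3 \mapsto -\nu_2 - 1$. I expect no genuine difficulty here; the points to handle carefully are the sign-and-index swap in the complement's eigenvalues and the identification $\overline{\Delta} = \Delta^c \cup \{1\}$, after which both halves of the proposition fall out of a single application of Lemma \ref{lem:main2}.
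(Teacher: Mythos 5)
Your proposal is correct and takes essentially the same approach as the paper, whose entire proof is the single line that the result ``follows immediately by applying Lemma \ref{lem:main2} both to $\Gamma$ and its complement.'' The details you supply---that $G$ acts regularly on the complement, that the eigenvalue gap is preserved, that $\overline{\mu} - \overline{\nu_3}(\overline{\nu_2}+1) = v - 2k + \lambda - \nu_3(\nu_2+1)$ under the substitution $\nu_2 \mapsto -\nu_3-1$, $\nu_3 \mapsto -\nu_2-1$, and that $x^G \cap \overline{\Delta} = x^G \cap \Delta^c$ for nontrivial $x$---are exactly the bookkeeping the paper leaves implicit.
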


\begin{proof}
 This follows immediately by applying Lemma \ref{lem:main2} both to $\Gamma$ and its complement. 
\end{proof}

\begin{cor}
 \label{cor:center}
 If $G$ is a group of order $v$ with a nontrivial center such that $\nu_2 - \nu_3$ divides neither $\mu - \nu_3(\nu_2 + 1)$ nor $v - 2k + \lambda - \nu_3( \nu_2 + 1)$, then $G$ cannot have a $(v,k,\lambda, \mu)$-PDS, i.e., $G$ cannot act regularly on the vertices of a non-conference $(v,k,\lambda, \mu)$-strongly regular graph.
\end{cor}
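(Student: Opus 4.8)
The plan is to argue by contradiction, deducing the statement from Proposition~\ref{prop:comp} applied to a nontrivial central element. First I would suppose that $G$ does possess a $(v,k,\lambda,\mu)$-PDS $S$. Since $S$ is a regular PDS (so $1 \notin S$ and $S = S^{-1}$), the Cayley graph $\Gamma = \Cay(G,S)$ is a $(v,k,\lambda,\mu)$-strongly regular graph, and $G$ acts regularly on the vertex set of $\Gamma$ by left multiplication; this is exactly the correspondence between regular PDSs and strongly regular Cayley graphs recalled in the introduction, and it is what makes the two halves of the stated equivalence interchangeable. Because the divisibility hypotheses are phrased in terms of the integers $\nu_2$ and $\nu_3$, the graph $\Gamma$ is non-conference, and hence Proposition~\ref{prop:comp} applies.

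The crucial observation is that a nontrivial central element has a singleton conjugacy class. Since $Z(G) \neq 1$ by hypothesis, I would choose a nontrivial element $x \in Z(G)$, so that $x^G = \{x\}$. Consequently, depending on whether $x \in \Delta$ or $x \in \Delta^c$, exactly one of the two sets $x^G \cap \Delta$ and $x^G \cap \Delta^c$ is empty (and the other equals $\{x\}$).

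This is precisely the desired contradiction. The divisibility hypotheses of the corollary are identical to those of Proposition~\ref{prop:comp}, which guarantees that \emph{both} $x^G \cap \Delta \neq \varnothing$ and $x^G \cap \Delta^c \neq \varnothing$ for every nontrivial $x \in G$. Since centrality of $x$ forces one of these intersections to be empty, no such PDS can exist, and the equivalent statement about regular actions on a non-conference strongly regular graph follows at once.

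I do not anticipate a genuine obstacle here: all of the analytic content has already been absorbed into Proposition~\ref{prop:comp} (and ultimately into Lemma~\ref{lem:main1} and the De~Winter--Kamischke--Wang generalization of Benson's Lemma, Lemma~\ref{lem:DKW}), so the corollary reduces to combining that proposition with the elementary fact that centrality collapses a conjugacy class to a single point. The only routine point to verify is the translation between the language of partial difference sets and that of regular actions on strongly regular Cayley graphs, which is standard.
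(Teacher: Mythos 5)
Your proof is correct and follows essentially the same route as the paper: take a nontrivial central element $z$, note $z^G = \{z\}$, and apply Proposition~\ref{prop:comp} to conclude that $z^G$ must meet both $\Delta$ and $\Delta^c$, which a singleton cannot do. The additional remarks you make (the PDS-to-Cayley-graph translation and the non-conference point) are details the paper leaves implicit, and they do not change the argument.
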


\begin{proof}
 If $z$ is a nontrivial element of $Z(G)$, then $z^G = \{z\}$.  If $G$ acts regularly on the vertices of a non-conference $(v,k,\lambda, \mu)$-strongly regular graph, by Proposition \ref{prop:comp}, $z^G \cap \Delta \neq \varnothing$ and $z^G \cap \Delta^c \neq \varnothing$, which is impossible.  
\end{proof}

\begin{cor}
 \label{cor:pgroup}
 Let $p$ be a prime and $G$ be a nontrivial $p$-group.  If $\nu_2 - \nu_3$ divides neither $\mu - \nu_3(\nu_2 + 1)$ nor $v - 2k + \lambda - \nu_3(\nu_2 + 1)$, then there does not exist a $(v,k,\lambda, \mu)$-PDS in $G$, i.e., $G$ cannot act regularly on the vertices of a non-conference $(v,k,\lambda, \mu)$-strongly regular graph.
\end{cor}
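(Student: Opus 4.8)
The plan is to reduce this statement directly to Corollary \ref{cor:center} by observing that every nontrivial finite $p$-group has a nontrivial center. First I would recall the standard class equation argument: writing $|G| = |Z(G)| + \sum_i [G : C_G(x_i)]$, where the $x_i$ range over representatives of the noncentral conjugacy classes of $G$, each summand $[G : C_G(x_i)]$ equals $|x_i^G|$ and is therefore a power of $p$ strictly greater than $1$ (strictly greater because $x_i$ is noncentral, so its centralizer is a proper subgroup), whence each such summand is divisible by $p$. Since $|G|$ is also divisible by $p$ (as $G$ is a nontrivial $p$-group), it follows that $p$ divides $|Z(G)|$, and in particular $Z(G) \neq 1$.

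With the nontriviality of the center established, I would simply invoke Corollary \ref{cor:center}: under the stated hypotheses that $\nu_2 - \nu_3$ divides neither $\mu - \nu_3(\nu_2 + 1)$ nor $v - 2k + \lambda - \nu_3(\nu_2 + 1)$, a group of order $v$ with a nontrivial center cannot act regularly on the vertices of a non-conference $(v,k,\lambda,\mu)$-strongly regular graph, and hence cannot contain a $(v,k,\lambda,\mu)$-PDS. Note that the equivalence between the existence of such a PDS and the existence of a regular action on the corresponding strongly regular graph is already built into the statement of Corollary \ref{cor:center}, so no additional translation is required here.

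There is no genuine obstacle in this argument: the only input beyond Corollary \ref{cor:center} is the classical fact that nontrivial $p$-groups have nontrivial centers, and once that fact is cited the conclusion is an immediate one-line deduction. The one point worth stating explicitly, to be safe, is that $G$ being a \emph{nontrivial} $p$-group is exactly what guarantees $p \mid |G|$ and hence forces $Z(G)$ to be nontrivial, so that the hypothesis of Corollary \ref{cor:center} is met.
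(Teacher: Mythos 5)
Your proposal is correct and follows exactly the same route as the paper: the paper's proof is a one-line reduction to Corollary \ref{cor:center} using the fact that nontrivial $p$-groups have nontrivial centers, which you simply spell out via the class equation. No gaps; your argument matches the paper's.
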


\begin{proof}
 This follows from Corollary \ref{cor:center} and the fact that nontrivial $p$-groups have nontrivial centers.
\end{proof}

We note that the conditions of Corollaries \ref{cor:center} and \ref{cor:pgroup} are nontrivial, since there are many feasible parameter sets to which these conditions apply; see Section \ref{sect:infeas}.

\section{Infeasible parameters of partial difference sets in groups with a nontrivial center}
\label{sect:infeas}

Tables \ref{tbl:nonp} and \ref{tbl:p} provide instances when partial difference sets can be ruled out entirely, since in every instance a group of order $v$ must have a nontrivial center.  Note that $\nu_2 - \nu_3 = \sqrt{\Lambda}$, and the column ``$m$'' represents the value of $\mu - \nu_3( \nu_2 + 1)$.  Note that calculating $v - 2k + \lambda - \nu_3(\nu_2 + 1)$ for a $(v,k,\lambda, \mu)$-SRG $\Gamma$ is equivalent to calculating $m = \mu - \nu_3( \nu_2 + 1)$ for the complement of $\Gamma$.

\begin{table}[H]\centering
\begin{minipage}{0.4\textwidth}\centering
 {\footnotesize \begin{tabular}{cccccccc}
  $v$ & $k$ & $\lambda$ & $\mu$ & $\nu_2$ & $\nu_3$ & $\sqrt{\Lambda}$ & $m$ \\
  \hline
  28  & 12 & 6 & 4 & 4 & -2 & 6 & 14 \\
      & 15 & 6 & 10 & 1 & -5 & 6 & 20 \\
  63  & 30 & 13 & 15 & 3 & -5 & 8 & 35 \\
      & 32 & 16 & 16 & 4 & -4 & 8 & 36 \\
  88  & 27 & 6 & 9 & 3 & -6 & 9 & 33 \\
      & 60 & 41 & 40 & 5 & -4 & 9 & 64 \\
  105 & 26 & 13 & 4 & 11 & -2 & 13 & 28 \\
      & 78 & 55 & 66 & 1 & -12 & 13 & 90 \\
  105 & 32 & 4 & 12 & 2 & -10 & 12 & 42 \\
      & 72 & 51 & 45 & 9 & -3 & 12 & 75 \\
  105 & 52 & 21 & 30 & 2 & -11 & 13 & 63 \\
      & 52 & 29 & 22 & 10 & -3 & 13 & 55 \\ 
  117 & 36 & 15 & 9 & 9 & -3 & 12 & 39 \\
      & 80 & 52 & 60 & 2 & -10 & 12 & 90 \\
  176 & 25 & 0 & 4 & 3 & -7 & 10 & 32 \\
      & 150 & 128 & 126 & 6 & -4 & 10 & 154 \\
  176 & 45 & 18 & 9 & 12 & -3 & 15 & 48 \\
      & 130 & 93 & 104 & 2 & -13 & 15 & 143 \\
  176 & 70 & 18 & 34 & 2 & -18 & 20 & 88 \\
      & 105 & 68 & 54 & 17 & -3 & 20 & 108 \\
  176 & 70 & 24 & 30 & 4 & -10 & 14 & 80 \\
      & 105 & 64 & 60 & 9 & -5 & 14 & 110 \\
  189 & 48 & 12 & 12 & 6 & -6 & 12 & 54 \\
      & 140 & 103 & 105 & 5 & -7 & 12 & 147 \\
  195 & 96 & 46 & 48 & 6 & -8 & 14 & 104 \\
      & 98 & 49 & 49 & 7 & -7 & 14 & 105 \\  
  208 & 75 & 30 & 25 & 10 & -5 & 15 & 80 \\
      & 132 & 81 & 88 & 4 & -11 & 15 & 143 \\
  \end{tabular}}
\end{minipage}\qquad
\begin{minipage}{0.4\textwidth}\centering
 {\footnotesize \begin{tabular}{cccccccc}
  $v$ & $k$ & $\lambda$ & $\mu$ & $\nu_2$ & $\nu_3$ & $\sqrt{\Lambda}$ & $m$ \\
  \hline
  208 & 81 & 24 & 36 & 3 & -15 & 18 & 88 \\
      & 126 & 80 & 70 & 14 & -4 & 18 & 130 \\
  225 & 96 & 51 & 33 & 21 & -3 & 24 & 99 \\
      & 128 & 64 & 84 & 2 & -22 & 24 & 150 \\
  231 & 30 & 9 & 3 & 9 & -3 & 12 & 33 \\
      & 200 & 172 & 180 & 2 & -10 & 12 & 210 \\
  231 & 40 & 20 & 4 & 18 & -2 & 20 & 42 \\
      & 190 & 153 & 171 & 1 & -19 & 20 & 209 \\
  231 & 90 & 33 & 36 & 6 & -9 & 15 & 99 \\
      & 140 & 85 & 84 & 8 & -7 & 15 & 147 \\
  232 & 33 & 2 & 5 & 4 & -7 & 11 & 40 \\
      & 198 & 169 & 168 & 6 & -5 & 11 & 203 \\
  232 & 63 & 14 & 18 & 5 & -9 & 14 & 72 \\
      & 168 & 122 & 120 & 8 & -6 & 14 & 174 \\
  232 & 77 & 36 & 20 & 19 & -3 & 22 & 80 \\
      & 154 & 96 & 114 & 2 & -20 & 22 & 174 \\
   232 & 81 & 30 & 27 & 9 & -6 & 15 & 87 \\
       & 150 & 95 & 100 & 5 & -10 & 15 & 160 \\
   236 & 55 & 18 & 11 & 11 & -4 & 15 & 59 \\
       & 180 & 135 & 144 & 3 & -12 & 15 & 192 \\ 
   275 & 112 & 30 & 56 & 2 & -28 & 30 & 140 \\
       & 162 & 105 & 81 & 27 & -3 & 30 & 165 \\
   279 & 128 & 52 & 64 & 4 & -16 & 20 & 144 \\
       & 150 & 85 & 75 & 15 & -5 & 20 & 155 \\
   285 & 64 & 8 & 16 & 4 & -12 & 17 & 76 \\
       & 220 & 171 & 165 & 11 & -5 & 16 & 225 \\
   297 & 128 & 64 & 48 & 20 & -4 & 24 & 132 \\
       & 168 & 87 & 105 & 3 & -21 & 24 & 189 \\
  \end{tabular}}
\end{minipage}
\caption{Parameters with $v \le 300$ ruled out by Corollary \ref{cor:center}}
\label{tbl:nonp}
\end{table}

\begin{table}[H]
 {\footnotesize \begin{tabular}{cccccccc}
  $v$ & $k$ & $\lambda$ & $\mu$ & $\nu_2$ & $\nu_3$ & $\sqrt{\Lambda}$ & $m$ \\
  \hline
  343 & 102 & 21 & 34 & 4 & -17 & 21 & 119 \\
      & 240 & 171 & 160 & 16 & -5 & 21 & 245 \\
  343 & 114 & 45 & 34 & 16 & -5 & 21 & 119 \\
      & 228 & 147 & 160 & 4 & -17 & 21 & 245 \\
  625 & 246 & 119 & 82 & 41 & -4 & 45 & 250 \\
      & 378 & 213 & 252 & 3 & -42 & 45 & 420 \\
  729 & 208 & 37 & 68 & 4 & -35 & 39 & 243 \\
      & 520 & 379 & 350 & 34 & -5 & 29 & 525 \\
  729 & 248 & 67 & 93 & 5 & -31 & 36 & 279 \\
      & 480 & 324 & 300 & 30 & -6 & 36 & 486 \\
  729 & 280 & 127 & 95 & 37 & -5 & 42 & 285 \\
      & 448 & 262 & 296 & 4 & -38 & 42 & 486 \\
  \end{tabular}}
 \caption{Parameters with $v \le 1000$ ruled out by Corollary \ref{cor:pgroup}}
 \label{tbl:p}
\end{table}

Tables \ref{tbl:center1} and \ref{tbl:center2} provide instances when partial difference sets can be ruled out for groups of order $v$ with nontrivial centers.  Note that $\nu_2 - \nu_3 = \sqrt{\Lambda}$, and the column ``$m$'' represents the value of $\mu - \nu_3( \nu_2 + 1)$. 

\begin{table}[H]
\begin{minipage}{0.4\textwidth}\centering
 {\footnotesize \begin{tabular}{cccccccc}
  $v$ & $k$ & $\lambda$ & $\mu$ & $\nu_2$ & $\nu_3$ & $\sqrt{\Lambda}$ & $m$ \\
  \hline
  36  & 14 & 7 & 4 & 5 & -2 & 7 & 16  \\
      & 21 & 10 & 15 & 1 & -6 & 7 & 27 \\
  50  & 21 & 8 & 9 & 3 & -4 & 7 & 25 \\
      & 28 & 15 & 16 & 3 & -4 & 7 & 32 \\
  66  & 20 & 10 & 4 & 8 & -2 & 10 & 22 \\
      & 45 & 28 & 36 & 1 & -9 & 10 & 54 \\ 
  70  & 27 & 12 & 9 & 6 & -3 & 9 & 30 \\
      & 42 & 23 & 28 & 2 & -7 & 9 & 49 \\
  78  & 22 & 11 & 4 & 9 & -2 & 11 & 24 \\
      & 55 & 36 & 45 & 1 & -10 & 11 & 65 \\
  96  & 35 & 10 & 14 & 3 & -7 & 10 & 42 \\
      & 60 & 38 & 36 & 6 & -4 & 10 & 64 \\
  100 & 33 & 14 & 9 & 8 & -3 & 11 & 36 \\
      & 66 & 41 & 48 & 2 & -9 & 11 & 75 \\
  120 & 28 & 14 & 4 & 12 & -2 & 14 & 30 \\
      & 91 & 66 & 78 & 1 & -13 & 14 & 104 \\
  120 & 42 & 8 & 18 & 2 & -12 & 14 & 54 \\
      & 77 & 52 & 44 & 11 & -3 & 14 & 80 \\
  126 & 25 & 8 & 4 & 7 & -3 & 10 & 28 \\
      & 100 & 78 & 84 & 2 & -8 & 10 & 108 \\
  126 & 50 & 13 & 24 & 2 & -13 & 15 & 63 \\
      & 75 & 48 & 39 & 12 & -3 & 15 & 78 \\
  126 & 60 & 33 & 24 & 12 & -3 & 15 & 63 \\
      & 65 & 28 & 39 & 2 & -13 & 15 & 78 \\
  130 & 48 & 20 & 16 & 8 & -4 & 12 & 52 \\
      & 81 & 48 & 54 & 3 & -9 & 12 & 90 \\
  136 & 30 & 8 & 6 & 6 & -4 & 10 & 34 \\
      & 105 & 80 & 84 & 3 & -7 & 10 & 112 \\
  136 & 60 & 24 & 28 & 4 & -8 & 12 & 68 \\
      & 75 & 42 & 40 & 7 & -5 & 12 & 80 \\
  136 & 63 & 30 & 28 & 7 & -5 & 12 & 68 \\
      & 72 & 36 & 40 & 4 & -8 & 12 & 80 \\
  148 & 63 & 22 & 30 & 3 & -11 & 14 & 74 \\
      & 84 & 50 & 44 & 10 & -4 & 14 & 88 \\
  148 & 70 & 36 & 30 & 10 & -4 & 14 & 74 \\
      & 77 & 36 & 44 & 3 & -11 & 14 & 88 \\
  154 & 48 & 12 & 16 & 4 & -8 & 12 & 56 \\
      & 105 & 72 & 70 & 7 & -5 & 12 & 110 \\
  154 & 72 & 26 & 40 & 2 & -16 & 18 & 88 \\
      & 81 & 48 & 36 & 15 & -3 & 18 & 84 \\
  170 & 78 & 35 & 36 & 6 & -7 & 13 & 85 \\
      & 91 & 48 & 49 & 6 & -7 & 13 & 98 \\
  171 & 34 & 17 & 4 & 15 & -2 & 17 & 36 \\
      & 136 & 105 & 120 & 1 & -16 & 17 & 152 \\
  171 & 50 & 13 & 15 & 5 & -7 & 12 & 57 \\
      & 120 & 84 & 84 & 6 & -6 & 12 & 126 \\
  171 & 60 & 15 & 24 & 3 & -12 & 15 & 72 \\
      & 110 & 73 & 66 & 11 & -4 & 15 & 114 \\
  \end{tabular}}
\end{minipage}\qquad
\begin{minipage}{0.4\textwidth}\centering
 {\footnotesize \begin{tabular}{cccccccc}
  $v$ & $k$ & $\lambda$ & $\mu$ & $\nu_2$ & $\nu_3$ & $\sqrt{\Lambda}$ & $m$ \\
  \hline
  190 & 36 & 18 & 4 & 16 & -2 & 18 & 38 \\
      & 153 & 120 & 136 & 1 & -17 & 18 & 170 \\
  190 & 45 & 12 & 10 & 7 & -5 & 12 & 50 \\
      & 144 & 108 & 112 & 4 & -8 & 12 & 152 \\
  190 & 84 & 33 & 40 & 4 & -11 & 15 & 95 \\
      & 105 & 60 & 55 & 10 & -5 & 15 & 110 \\
  190 & 84 & 38 & 36 & 8 & -6 & 14 & 90 \\
      & 105 & 56 & 60 & 5 & -9 & 14 & 114 \\
  190 & 90 & 45 & 40 & 10 & -5 & 15 & 95 \\
      & 99 & 48 & 55 & 4 & -11 & 15 & 110 \\
  196 & 39 & 2 & 9 & 3 & -10 & 13 & 49 \\
      & 156 & 125 & 120 & 9 & -4 & 13 & 160 \\
  196 & 60 & 23 & 16 & 11 & -4 & 15 & 64 \\
      & 135 & 90 & 99 & 3 & -12 & 15 & 147 \\
  204 & 63 & 22 & 18 & 9 & -5 & 14 & 68 \\
      & 140 & 94 & 100 & 4 & -10 & 14 & 150 \\
  210 & 38 & 19 & 4 & 17 & -2 & 19 & 40 \\
      & 171 & 136 & 153 & 1 & -18 & 19 & 189 \\
  220 & 84 & 38 & 28 & 14 & -4 & 18 & 88 \\
      & 135 & 78 & 90 & 3 & -15 & 18 & 150 \\
  222 & 51 & 20 & 9 & 14 & -3 & 17 & 54 \\
      & 170 & 127 & 140 & 2 & -15 & 17 & 185 \\
  238 & 75 & 20 & 25 & 5 & -10 & 15 & 85 \\
      & 162 & 111 & 108 & 9 & -6 & 15 & 168 \\
  244 & 108 & 42 & 52 & 4 & -14 & 18 & 121 \\
      & 135 & 78 & 70 & 13 & -5 & 18 & 140 \\
  244 & 117 & 60 & 52 & 13 & -5 & 18 & 122 \\
      & 126 & 60 & 70 & 4 & -14 & 18 & 140 \\
  246 & 85 & 20 & 34 & 3 & -17 & 20 & 102 \\
      & 160 & 108 & 96 & 16 & -4 & 20 & 164 \\
  246 & 105 & 36 & 51 & 3 & -18 & 21 & 123 \\
      & 140 & 85 & 72 & 17 & -4 & 21 & 144 \\
  246 & 119 & 64 & 51 & 17 & -4 & 21 & 123 \\
      & 126 & 57 & 72 & 3 & -18 & 21 & 144 \\
  266 & 45 & 0 & 9 & 3 & -12 & 15 & 57 \\
      & 220 & 183 & 176 & 11 & -4 & 15 & 224 \\
  273 & 80 & 19 & 25 & 5 & -11 & 16 & 91 \\
      & 192 & 136 & 132 & 10 & -6 & 16 & 198 \\
  273 & 102 & 41 & 36 & 11 & -6 & 17 & 108 \\
      & 170 & 103 & 110 & 5 & -12 & 17 & 182 \\
  273 & 136 & 65 & 70 & 6 & -11 & 17 & 147 \\
      & 136 & 69 & 66 & 10 & -7 & 17 & 143 \\
  276 & 44 & 22 & 4 & 20 & -2 & 22 & 46 \\
      & 231 & 190 & 210 & 1 & -21 & 22 & 252 \\
  276 & 75 & 10 & 24 & 3 & -17 & 20 & 92 \\
      & 200 & 148 & 136 & 16 & -4 & 20 & 204 \\
  276 & 75 & 18 & 21 & 6 & -9 & 15 & 84 \\
      & 200 & 145 & 144 & 8 & -7 & 15 & 207 \\ 
\end{tabular}}
\end{minipage}
\caption{Parameters ruled out for groups with a nontrivial center when $v \le 300$ by Corollary \ref{cor:center}}
\label{tbl:center1}
\end{table}

\begin{table}[H]
{\footnotesize \begin{tabular}{cccccccc}
  $v$ & $k$ & $\lambda$ & $\mu$ & $\nu_2$ & $\nu_3$ & $\sqrt{\Lambda}$ & $m$ \\
  \hline
  276 & 110 & 52 & 38 & 18 & -4 & 22 & 114 \\
      & 165 & 92 & 108 & 3 & -19 & 22 & 184 \\
  276 & 135 & 78 & 54 & 27 & -3 & 30 & 138 \\
      & 140 & 58 & 84 & 2 & -28 & 30 & 168 \\
  280 & 117 & 44 & 52 & 5 & -13 & 18 & 130 \\
      & 162 & 96 & 90 & 12 & -6 & 18 & 168 \\
  286 & 95 & 24 & 33 & 4 & -15 & 19 & 108 \\
      & 190 & 129 & 120 & 14 & -5 & 19 & 195 \\
  286 & 125 & 60 & 50 & 15 & -5 & 20 & 130 \\
      & 160 & 84 & 96 & 4 & -16 & 20 & 176 \\
  288 & 105 & 52 & 30 & 25 & -3 & 28 & 108 \\
      & 182 & 106 & 130 & 2 & -26 & 28 & 208 \\
  290 & 136 & 63 & 64 & 8 & -9 & 17 & 145 \\
      & 153 & 80 & 81 & 8 & -9 & 17 & 161 \\
  300 & 46 & 23 & 4 & 21 & -2 & 23 & 48 \\
      & 253 & 210 & 231 & 1 & -22 & 23 & 275 \\
\end{tabular}} 
\caption{Parameters ruled out for groups with a nontrivial center when $v \le 300$ by Corollary \ref{cor:center}, continued}
\label{tbl:center2}
\end{table}

\noindent\textsc{Acknowledgments.}  
The authors wish to thank the anonymous referees for their helpful reports.

\bibliographystyle{plain}
\bibliography{PDS}

\end{document}